\newcommand\version{November 6, 2022}
\newtheorem{theorem}{Theorem}%[section]
\newtheorem{proposition}[theorem]{Proposition}
\theoremstyle{definition}
\theoremstyle{remark}
\numberwithin{equation}{section}
\renewcommand{\epsilon}{\varepsilon}
\newcommand{\R}{\mathbb{R}}
\newcommand{\eps}{\varepsilon}
\newcommand{\xl}{{x, \lambda}}
\newcommand{\pxi}{\partial_{x_i}}
\newcommand{\Sph}{{\mathbb{S}}}
\numberwithin{equation}{section}
\DeclareMathOperator{\dist}{dist}
\begin{document}

\title{On the sharp constant in the Bianchi--Egnell stability inequality}

\author{Tobias K\"onig}
\address[Tobias K\"onig]{Institut für Mathematik, 
Goethe-Universität Frankfurt, 
Robert-Mayer-Str. 10, 60325 Frankfurt am Main, Germany}
\email{koenig@mathematik.uni-frankfurt.de}

\date{\version}
\thanks{\copyright\, 2022 by the author. This paper may be reproduced, in its entirety, for non-commercial purposes.\\
The author thanks R.~L.~Frank for helpful advice.}

\maketitle

\begin{abstract}
This note is concerned with the Bianchi-Egnell inequality, which quantifies the stability of the Sobolev inequality, and its generalization to fractional exponents $s \in (0, \frac{d}{2})$. 
We prove that in dimension $d \geq 2$ the best constant 
\[ c_{BE}(s) = \inf_{f \in \dot{H}^s(\mathbb R^d) \setminus \mathcal M} \frac{\|(-\Delta)^{s/2} f\|_{L^2(\mathbb R^d)}^2 - S_{d,s} \|f\|_{L^{2^*}(\mathbb R^d)}^2}{\dist_{\dot{H}^s(\mathbb R^d)}(f, \mathcal M)^2} \]
is strictly smaller than the spectral gap constant $\frac{4s}{d+2s+2}$ associated to sequences which converge to the manifold $\mathcal M$ of Sobolev optimizers. In particular, $c_{BE}(s)$ cannot be asymptotically attained by such sequences.  Our proof relies on a precise expansion of the Bianchi-Egnell quotient along a well-chosen sequence of test functions converging to $\mathcal M$. 
\end{abstract}

\section{Introduction and main result }

For $0 < s < \frac{d}{2}$, the (fractional) Sobolev inequality states that 
\begin{equation}
\label{Sobolev}
\|(-\Delta)^{s/2} f\|_{L^2(\R^d)}^2 \geq S_{d,s} \|f\|_{L^{2^*}(\R^d)}^2, 
\end{equation} 
for every $f$ in the homogeneous Sobolev space $\dot{H}^s(\R^d)$. Here 
\[ 2^* = \frac{2d}{d-2s} \]
is the critical Sobolev exponent and the best constant $S_{d,s}$ is given by
\begin{equation}
\label{S ds definition}
S_{d,s} = 2^{2s} \pi^s \frac{\Gamma(\frac{d+2s}{2})}{\Gamma(\frac{d-2s}{2})} \left( \frac{\Gamma(\frac{d}{2})}{\Gamma(d)} \right)^{\frac{2s}{d}} 
\end{equation} 
It was shown by Lieb \cite{Li} in an equivalent dual formulation (and before by Aubin \cite{Au} and Talenti \cite{Ta} for $s = 1$) that $S_{d,s}$  is  optimal  and that it is achieved precisely by the so-called \emph{Talenti bubbles}, i.e. functions belonging to the $(d+2)$-dimensional manifold
\begin{equation}
\label{M definition}
\mathcal M := \left \{ x \mapsto c (a + |x-b|^2)^{-\frac{d-2s}{2}} \, : \, a > 0, \, b \in \R^d, \, c \in \R \setminus \{0\} \right \}. 
\end{equation}

In recent years, the study of the stability properties of the Sobolev and other geometric inequalities has attracted much interest and remarkable advances. The natural stability inequality associated to \eqref{Sobolev} has been established by Bianchi and Egnell \cite{BiEg} for $s=1$ and $d \geq 3$ and was later extended in \cite{ChFrWe} to all $s \in (0, \frac{d}{2})$. It states that there is a constant $c_{BE}(s) > 0$ such that 
\begin{equation}
\label{bianchi-egnell}
\mathcal E(f) := \frac{\|(-\Delta)^{s/2} f\|_2^2 - S_{d,s} \|f\|_{2^*}^2}{\dist(f, \mathcal M)^2} \geq c_{BE}(s) \qquad \text{ for all } f \in \dot{H}^s(\R^d) \setminus \mathcal M. 
\end{equation}
Here and in the rest of this paper, $\dist$ denotes the distance in $\dot{H}^s(\R^d)$, i.e. $\dist (f, \mathcal M) := \inf_{g \in \mathcal M} \| (-\Delta)^{s/2} (f-g)\|_2$. Moreover, we abbreviate $\|\cdot\|_p  = \|\cdot\|_{L^p(\R^d)}$ for ease of notation.

Since the proof of \eqref{bianchi-egnell} in \cite{BiEg, ChFrWe} proceeds by compactness, it yields no explicit lower bound on the constant $c_{BE}(s)$. For $s=1$, the first constructive lower bound on $c_{BE}(1)$ is proved in the recent preprint \cite{DoEsFiFrLo}. 

On the other hand, it is standard to derive an upper bound on $c_{BE}(s)$ by using an explicit sequence of test functions of the form
\[ f_\eps(x) = (1 + |x|^2)^{-\frac{d-2s}{2}} + \eps \rho \]
converging to $\mathcal M$ as $\eps \to 0$. 
As observed in \cite{ChFrWe}, a suitable choice of $\rho$ then yields the bound
\begin{equation}
\label{spectral bound}
c_{BE}(s) \leq \frac{4s}{d+2s + 2} . 
\end{equation}

The purpose of this note is to prove that the inequality \eqref{spectral bound} must  in fact be strict.

\begin{theorem}
\label{theorem strict inequality}
Let $s \in (0, \frac{d}{2})$ with $d \geq 2$ and let $c_{BE}(s)$ be the optimal constant in \eqref{bianchi-egnell}. Then 
\[ c_{BE}(s) < \frac{4s}{d +2s + 2}. \]
\end{theorem}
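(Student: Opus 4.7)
The plan is to expand the Bianchi--Egnell functional $\mathcal{E}$ along a carefully chosen test sequence $f_\eps = U + \eps\rho$ beyond the leading order predicted by the spectral gap, and to exhibit a negative linear-in-$\eps$ correction. For $U$ I would take the standard Talenti bubble $U(x) = (1+|x|^2)^{-(d-2s)/2}$, and for $\rho$ a first non-trivial eigenfunction of the Sobolev Euler--Lagrange operator linearized at $U$, of the explicit form $\rho = U \cdot \tilde Y_2$, where $\tilde Y_2$ is the pull-back to $\mathbb{R}^d$ under stereographic projection of a degree-$2$ spherical harmonic $Y_2$ on $S^d$. Orthogonality of spherical harmonics of distinct degrees gives $\rho\perp_{\dot H^s(\mathbb R^d)} T_U\mathcal M$ and in particular $\int U^{2^*-1}\rho\,dx = 0$, and the eigenvalue identity reads
\[\|(-\Delta)^{s/2}\rho\|_2^2 - S_{d,s}(2^*-1)\|U\|_{2^*}^{2-2^*}\int U^{2^*-2}\rho^2\,dx = \tfrac{4s}{d+2s+2}\|(-\Delta)^{s/2}\rho\|_2^2.\]

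The orthogonality $\rho \perp_{\dot H^s} T_U\mathcal M$ implies that $U$ itself is a critical point of $g \mapsto \|f_\eps - g\|_{\dot H^s}^2$ on $\mathcal M$ (the first variation vanishes exactly); since it is moreover a strict local minimum for small $\eps$, and $f_\eps$ stays in a shrinking $\dot H^s$-neighborhood of $U$ as $\eps \to 0$, the global minimizer on $\mathcal M$ must equal $U$ for all sufficiently small $\eps>0$. Consequently
\[ \dist(f_\eps, \mathcal M)^2 = \|f_\eps - U\|_{\dot H^s}^2 = \eps^2 \|(-\Delta)^{s/2}\rho\|_2^2, \]
with no higher-order correction. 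On the other hand, Taylor-expanding the numerator $N(\eps) := \|(-\Delta)^{s/2} f_\eps\|_2^2 - S_{d,s}\|f_\eps\|_{2^*}^2$ to order $\eps^3$ via the pointwise binomial series $(U+\eps\rho)^{2^*} = \sum_k \binom{2^*}{k}U^{2^*-k}(\eps\rho)^k$ combined with the outer power $2/2^*$, and using $\int U^{2^*-1}\rho=0$ to kill the linear contribution, yields
\[ N(\eps) = \tfrac{4s}{d+2s+2}\|(-\Delta)^{s/2}\rho\|_2^2\,\eps^2 \; -\; C_1\int U^{2^*-3}\rho^3\,dx\,\eps^3 + O(\eps^4), \]
with the explicit positive constant $C_1 = \tfrac{(2^*-1)(2^*-2)}{3} S_{d,s}\|U\|_{2^*}^{2-2^*}$. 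Combining the two expansions,
\[ \mathcal E(f_\eps) = \tfrac{4s}{d+2s+2} \;-\; \frac{C_1\int U^{2^*-3}\rho^3\,dx}{\|(-\Delta)^{s/2}\rho\|_2^2}\,\eps + O(\eps^2). \]

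It remains to choose $Y_2$ so that $\int U^{2^*-3}\rho^3\,dx \neq 0$, and then to flip the sign of $\rho$ if necessary to make the $\eps$-coefficient strictly negative. Writing $\rho = U\tilde Y_2$ and using that $U(x)^{2^*}\,dx$ is a positive multiple of the pushforward of the round measure on $S^d$ under stereographic projection, one computes
\[ \int U^{2^*-3}\rho^3\,dx = \int U^{2^*}\tilde Y_2^3\,dx = c_0 \int_{S^d} Y_2^3\,dS^d \]
with $c_0 > 0$. For $d \geq 2$, the latter integral is readily evaluated for the zonal harmonic $Y_2(\omega) = \omega_{d+1}^2 - \tfrac{1}{d+1}$ and is nonzero, whereas for $d=1$ the degree-$2$ harmonics $\cos 2\theta, \sin 2\theta$ have cubes with vanishing integral --- which explains the dimensional restriction in the theorem. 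The main technical step I anticipate is the careful bookkeeping in the cubic-order Taylor expansion of $\|f_\eps\|_{2^*}^2$, combining the inner binomial expansion of $(U+\eps\rho)^{2^*}$ (valid pointwise since $U+\eps\rho > 0$ for small $\eps$ and $\rho$ bounded) with the outer power $2/2^*$; the sign computation of $\int_{S^d} Y_2^3\,dS^d$ is then an elementary direct calculation.
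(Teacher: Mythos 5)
Your proposal is correct and follows exactly the same strategy as the paper: test functions $f_\eps = U + \eps\rho$ with $\rho$ saturating the spectral gap inequality, exact computation of $\dist(f_\eps,\mathcal M)$ via orthogonality and the implicit function theorem, Taylor expansion of the numerator to cubic order (and your coefficient $C_1 = \tfrac{(2^*-1)(2^*-2)}{3}S_{d,s}\|U\|_{2^*}^{2-2^*}$ matches the paper's), and then making $\int U^{2^*-3}\rho^3\,dx$ nonzero by a suitable choice of degree-2 spherical harmonic.

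The one place you genuinely diverge is the choice of harmonic. The paper takes $v_2(\omega)=\omega_1\omega_2+\omega_2\omega_3+\omega_3\omega_1$, for which $\int_{\Sph^d}v_2^3\,d\omega = 6\int_{\Sph^d}\omega_1^2\omega_2^2\omega_3^2\,d\omega > 0$ drops out immediately, since every other monomial in $v_2^3$ is odd in some coordinate; this is why $d\geq 2$ is needed ($\Sph^d\subset\R^{d+1}$ must have at least three coordinates). You instead take the zonal harmonic $Y_2(\omega)=\omega_{d+1}^2-\tfrac1{d+1}$, which requires a Beta-function moment computation: reducing to the $t=\omega_{d+1}$ marginal with density $\propto(1-t^2)^{(d-2)/2}$, one finds
\[
\int_{\Sph^d}Y_2^3\,d\omega \;=\; c(d)\,\frac{8d(d-1)}{(d+1)^3(d+3)(d+5)}, \qquad c(d)>0,
\]
which is indeed positive for $d\geq2$ and vanishes at $d=1$, recovering the same dimensional threshold. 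So both choices work; the paper's is chosen to make the sign evident by parity alone, whereas yours needs the (still elementary, but not quite ``one-line'') moment calculation that you defer. You also add a correct heuristic explanation of why $d=1$ fails for any degree-$2$ harmonic, which the paper does not spell out.

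One small point worth tightening: your justification that $\dist(f_\eps,\mathcal M)$ is attained exactly at $U$ for small $\eps$ is a bit informal as stated (``strict local minimum'' plus ``shrinking neighborhood'' does not by itself rule out a competing near-minimizer elsewhere in the neighborhood). The clean argument, as in the paper, is to invoke the implicit function theorem to get a unique critical point of $g\mapsto\|f_\eps-g\|_{\dot H^s}^2$ on $\mathcal M$ near $U$, observe by orthogonality that $U$ is one, and conclude it is the minimizer. Also, your remainder $O(\eps^4)$ is fine here since $\rho/U$ is bounded so the pointwise binomial series converges, though the weaker $o(\eps^3)$ used by the paper is all that is needed.
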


Let us emphasize that Theorem \ref{theorem strict inequality} covers in particular the classical case $s = 1$, $d \geq 3$, and that its conclusion is new also for this case. 

Theorem \ref{theorem strict inequality} shows that the study of the sharp constant $c_{BE}(s)$ cannot be reduced to a local analysis near the manifold $\mathcal M$. This phenomenon is analogous to the situation for the planar isoperimetric inequality and its associated stability inequality; we refer to the introduction of \cite{DoEsFiFrLo} for more details and references about this case. 

We finally mention that the recent preprint \cite{ChLuTa} gives an abstract condition under which the global best constant of a general stability inequality is equal to the local best constant corresponding to the right side of \eqref{spectral bound}. However, consistently with Theorem \ref{theorem strict inequality}, this condition is not fulfilled for the Sobolev stability inequality \eqref{bianchi-egnell}.

\section{Proof of Theorem \ref{theorem strict inequality}}

We prove Theorem \ref{theorem strict inequality} by using the same idea as for inequality \eqref{spectral bound}, but we manage to be more precise in the asymptotic expansion near $\mathcal M$ and in the choice of the perturbation $\rho$. 

To be more precise, let us introduce some more notation. We fix the standard Talenti bubble 
\[ U(y) := (1 + |y|^2)^{-\frac{d-2s}{2}}. \]
and denote its dilated and translated version by 
\[ U_\xl(y) := \lambda^\frac{d-2s}{2} U(\lambda (y-x)).  \]
Then the tangent space $T$ of $\mathcal M$ at $U$ is spanned by $U$ and the $d+1$ functions
\[ V_0 := \partial_\lambda|_{\lambda = 1} U_{0, \lambda}, \qquad V_i:= \pxi|_{x = 0} U_{x,1} . \qquad i = 1,...,d \]
We denote by $T^\perp$ the orthogonal of $T$ with respect to the $\dot{H}^s$ scalar product $\langle u, v \rangle_{\dot{H}^s(\R^d)} = \int_{\R^d}(-\Delta)^{s/2} u (-\Delta)^{s/2} v \, dx$.

We also introduce the (inverse) stereographic projection $\mathcal S: \R^d \to \Sph^d$ given, in Cartesian coordinates of $\Sph^d \subset \R^{d+1}$, by 
\begin{equation}
\label{ster proj definition}
(\mathcal S(x))_i = \frac{2 x_i}{1 + |x|^2} \quad (i = 1,...,d), \qquad (\mathcal S(x))_{d+1} = \frac{1 - |x|^2}{1+|x|^2}. 
\end{equation} 
It is convenient to denote by $J_{\mathcal S}(x)= |\det D \mathcal S(x)| = \left(\frac{2}{1 + |x|^2}\right)^d = 2^d U(x)^{2^*}$ its Jacobian.  The relevant transformation properties of the stereographic projection and discussed to some greater extent, e.g.,  in \cite[Section 2]{ChFrWe}.

The following spectral gap inequality plays a crucial role in \cite{BiEg} and \cite{ChFrWe} and seems to appear for the first time in \cite{Rey}, for $s = 1$. For $s \in (0, \frac{d}{2})$, a detailed statement and proof can be found in \cite{DeKo}. 

\begin{proposition}
\label{proposition spectral gap ineq}
Let $\rho \in T^\perp$. Then 
\[ \|(-\Delta)^{s/2} \rho\|_2^2 - S_{d,s} (2^*-1) \|U\|_{2^*}^{2 - 2^*} \int_{\R^d} U^{2^* - 2} \rho^2 \, dx \geq \frac{4s}{d+2s+2} \|(-\Delta)^{s/2} \rho\|_2^2. \]
Moreover, equality holds if and only if 
\begin{equation}
\label{rey equality case}
\rho(x) = J_\mathcal S(x)^\frac{1}{2^*} v_2(\mathcal S(x)), 
\end{equation} 
with $v_2$ a spherical harmonic of degree $\ell = 2$ (i.e. the restriction to $\Sph^d$ of a homogeneous harmonic polynomial on $\R^{d+1}$ of degree $2$). 
\end{proposition}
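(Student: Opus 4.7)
The plan is to pull back the problem from $\R^d$ to the sphere $\Sph^d$ via inverse stereographic projection, where $(-\Delta)^s$ corresponds to the conformally covariant intertwining operator $P_{2s}$ of Branson, which acts diagonally in the basis of spherical harmonics.

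First, I would associate to $\rho \in \dot H^s(\R^d)$ the function $v : \Sph^d \to \R$ through $\rho(x) = J_\mathcal S(x)^{1/2^*} v(\mathcal S(x))$. The conformal covariance of $(-\Delta)^s$ (cf. \cite[Section 2]{ChFrWe}) would then yield
\[ \|(-\Delta)^{s/2}\rho\|_2^2 = \langle v, P_{2s} v\rangle_{L^2(\Sph^d)} = \sum_{\ell \geq 0} \mu_\ell \|v_\ell\|_{L^2(\Sph^d)}^2, \]
where $v_\ell$ is the projection of $v$ onto spherical harmonics of degree $\ell$, and
\[ \mu_\ell = \frac{\Gamma(\ell + \frac{d}{2}+s)}{\Gamma(\ell + \frac{d}{2}-s)}. \]
A change of variables using $J_\mathcal S = 2^d U^{2^*}$, $dx = J_\mathcal S^{-1} d\omega$, and $U^{2^*-2} = 2^{-2s} J_\mathcal S^{2s/d}$ would give
\[ \int_{\R^d} U^{2^*-2} \rho^2 \, dx = 2^{-2s} \|v\|_{L^2(\Sph^d)}^2. \]

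Next, I would identify the coefficient $S_{d,s}(2^*-1)\|U\|_{2^*}^{2-2^*} \cdot 2^{-2s}$ with $\mu_1$. Under the map $\rho \mapsto v$, a direct computation shows that $U, V_0, V_1, \ldots, V_d$ correspond respectively to the constant function and to the coordinate functions $\omega_{d+1}, \omega_1, \ldots, \omega_d$ on $\Sph^d$, spanning precisely the spherical harmonics of degrees $\ell \in \{0,1\}$. Since $\mathcal M$ is a manifold of Sobolev minimizers, the $V_j$'s are tangent to $\mathcal M$ at $U$ and so must lie in the kernel of the second-variation quadratic form on the left side of the inequality; this pins down the multiplicative constant as $\mu_1$ (alternatively, a direct calculation using \eqref{S ds definition} gives the same). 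The orthogonality $\rho \in T^\perp$ with respect to $\langle \cdot, \cdot \rangle_{\dot H^s}$ then translates, via the spectral decomposition of $P_{2s}$, to $v$ being $L^2(\Sph^d)$-orthogonal to all spherical harmonics of degrees $0$ and $1$.

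Combining these reductions, the desired inequality becomes the term-by-term comparison
\[ \frac{\mu_\ell - \mu_1}{\mu_\ell} \geq \frac{4s}{d+2s+2}, \qquad \ell \geq 2. \]
Using the recursion $\mu_{\ell+1}/\mu_\ell = (\ell + \frac{d}{2} + s)/(\ell + \frac{d}{2} - s)$, one computes $\mu_2/\mu_1 = (d+2+2s)/(d+2-2s)$, so that $(\mu_2-\mu_1)/\mu_2 = 4s/(d+2s+2)$ exactly. Since $(\mu_\ell - \mu_1)/\mu_\ell = 1 - \mu_1/\mu_\ell$ is strictly increasing in $\ell$, the inequality is strict for $\ell \geq 3$, so equality in the full inequality holds if and only if $v$ is a pure degree-$2$ spherical harmonic, giving exactly \eqref{rey equality case}. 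The main obstacle lies in the careful bookkeeping of the first step—verifying the conformal covariance identity with the correct multiplicative constants and the eigenvalue formula for $P_{2s}$; these are by now well-established tools (see \cite{ChFrWe, DeKo}), but the computation of constants is delicate and must be executed without error. Once this is in place, the remaining algebraic steps are routine.
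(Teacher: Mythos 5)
Your proof is correct and is essentially the standard argument: the paper itself does not prove Proposition \ref{proposition spectral gap ineq} but defers to \cite{Rey}, \cite{ChFrWe}, \cite{DeKo}, where exactly this stereographic pull-back to $\Sph^d$ and the spectral decomposition of the intertwining operator with eigenvalues $\mu_\ell=\Gamma(\ell+\tfrac d2+s)/\Gamma(\ell+\tfrac d2-s)$ is carried out. Your constants all check out, in particular $U^{2^*-2}\rho^2\,dx=2^{-2s}v^2\,d\omega$, $S_{d,s}(2^*-1)\|U\|_{2^*}^{2-2^*}\,2^{-2s}=\mu_1$, and $(\mu_2-\mu_1)/\mu_2=\tfrac{4s}{d+2s+2}$ with $(\mu_\ell-\mu_1)/\mu_\ell$ strictly increasing in $\ell$, which yields the stated equality case.
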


We can now prove our main result.

\begin{proof}
[Proof of Theorem \ref{theorem strict inequality}]
We build a sequence of test functions of the form 
\[ f_\eps = U + \eps \rho, \]
where $\eps \to 0$ and  $\rho \in T^\perp$ is as in \eqref{rey equality case}, with a certain spherical harmonic $v_2$ to be determined. 

The orthogonality relations, and the fact that $(-\Delta)^{s/2} U = c_{d,s} U^{2^* -1}$, for $c_{d,s} = \frac{S_{d,s}}{\|U\|_{2^*}^{2^* - 2}}$, easily imply 
\[ \|f_\eps\|_2^2 = \|(-\Delta)^{s/2} U\|_2^2 + \eps^2 \|\rho\|_2^2 \]
and 
\[ \int_{\R^d} U^{2^* - 1} \rho \, dx = \frac{1}{c_{d,s}} \int_{\R^d} (-\Delta)^{s/2} U  (-\Delta)^{s/2} \rho \, dx = 0. \]
On the other hand, a Taylor expansion yields 
\begin{align*}
(U + \eps \rho)^{2^*} = U^{2^*} + \eps {2^*} U^{{2^*}-1} \rho + \eps^2 \frac{{2^*}({2^*}-1)}{2} U^{{2^*}-2} \rho^2 + \eps^3 \frac{{2^*}({2^*}-1)({2^*}-2)}{6} U^{{2^*}-3} \rho^3 + o(\eps^3), 
\end{align*} 
Notice that the Taylor expansion up to third order is justified no matter the value of ${2^*}$, because $\eps |\rho(x)| << U(x)$ in every point $x \in \R^d$, as $\eps \to 0$. 

Hence, using $\int_{\R^d} U^{{2^*}-1} \rho \, dx = 0$, 
\begin{align*}
&\quad \|U + \eps \rho\|_{2^*}^2 \\
&= \left( \int_{\R^d} \left(U^{2^*} + \eps^2 \frac{{2^*}({2^*}-1)}{2} U^{{2^*}-2} \rho^2 + \eps^3 \frac{{2^*}({2^*}-1)({2^*}-2)}{6} U^{{2^*}-3} \rho^3 \right) \, dx \right)^\frac{2}{{2^*}}  + o(\eps^3) \\
&= \|U\|_{2^*}^2 + \eps^2 ({2^*}-1)  \|U\|_{2^*}^{2-{2^*}} \int_{\R^d} U^{{2^*}-2} \rho^2 \, dx \\
&\quad  + \eps^3 \frac{({2^*}-1)({2^*}-2)}{3} \|U\|_{2^*}^{2-{2^*}} \int_{\R^d} U^{{2^*}-3}  \rho^3 \, dx  
   + o(\eps^3). 
\end{align*}
Now recall that $\|(-\Delta)^{s/2} U\|_2^2 = S_{d,s} \|U\|_{2^*}^2$ and that we have chosen $\rho$ to achieve equality in Proposition \ref{proposition spectral gap ineq}. We thus find that the numerator of the Bianchi--Egnell quotient in \eqref{bianchi-egnell} equals
\begin{align*}
&\quad \|(-\Delta)^{s/2} f_\eps\|_2^2 - S_{d,s} \|f_\eps\|_{2^*}^2 \\
&= \left(\|(-\Delta)^{s/2} U\|_2^2 - S_{d,s} \|U\|_{2^*}^2\right) + \eps^2 \left(\|(-\Delta)^{s/2} \rho\|_2^2 - S_{d,s} (2^* - 1) \|U\|_{2^*}^{2 - 2^*} \int_{\R^d} U^{2^*-2} \rho^2 \, dx \right) \\
& \quad - \eps^3 S_{d,s} \frac{(2^*-1)(2^*-2)}{3} \|U\|_{2^*}^{2 - 2^*} \int_{\R^d} U^{2^* - 3} \rho^3 \, dx + o(\eps^3) \\
&= \frac{4}{d +2s + 2} \eps^2 \|(-\Delta)^{s/2} \rho\|_2^2 - \eps^3 S_{d,s} \frac{(2^*-1)(2^*-2)}{3} \|U\|_{2^*}^{2 - 2^*} \int_{\R^d} U^{2^* - 3} \rho^3 \, dx + o(\eps^3). 
\end{align*}
It follows by the implicit function theorem (see \cite{BaCo} for a similar argument for $s=1$, which remains valid for all $s \in (0, \frac{d}{2})$ as observed in \cite{AbChHa}) that for $\eps > 0$ small enough the minimum of the distance $\dist(U + \eps \rho, \mathcal M)$ is in fact achieved in $U$. Hence
\[ \dist(U + \eps \rho, \mathcal M)^2 = \eps^2 \|(-\Delta)^{s/2} \rho\|^2_2. \]
Thus the above expansions yield, for $\eps > 0$ small enough, the desired strict inequality 
\[ \mathcal E(U + \eps \rho) < \frac{4s}{d+2s+2}, \]
provided we can choose $v_2$ in a way such that (with the relation \eqref{rey equality case} between $v_2$ and $\rho$)
\begin{equation}
\label{int rho^3 strictly positive}
\int_{\R^d} U^{2^* - 3} \rho^3 \, dx > 0. 
\end{equation}

To achieve this, we make the choice 
\[ v_2(\omega) = \omega_1 \omega_2 + \omega_2 \omega_3 + \omega_3 \omega_1, \]
which is clearly a spherical harmonic of degree $\ell = 2$. (This choice of $v_2$ is what necessitates the additional hypothesis $d \geq 2$.) We have 
\[ v_2(\omega)^3 = I_1(\omega) + I_2(\omega) + I_3(\omega), \]
where 
\begin{align*}
I_1(\omega) &= 6 \omega_1^2 \omega_2^2 \omega_3^2, \\
I_2 (\omega) &= 3 ( \omega_1 \omega_2^2 \omega_3^3 + \omega_1 \omega_3^2 \omega_2^3 + \omega_2 \omega_1^2 \omega_3^3 + \omega_3 \omega_2^2 \omega_1^3 + \omega_2 \omega_3^2 \omega_1^3 + \omega_3 \omega_1^2 \omega_2^3), \\
I_3( \omega) &= \omega_1^3 \omega_2^3 + \omega_1^3 \omega_3^3 + \omega_2^3 \omega_3^3. 
\end{align*} 
 Writing \eqref{rey equality case} as $\rho(x) = 2^{\frac{d-2s}{2}} U(x) v_2(\mathcal S(x))$, we get 
\begin{align*}
\int_{\R^d} U^{2^* - 3} \rho^3 \, dx &= 2^{\frac{3(d-2s)}{2} - d} \int_{\Sph^d} v_2(\omega)^3 \, d\omega =  2^{\frac{3(d-2s)}{2} - d} \int_{\Sph^d} (I_1(\omega) + I_2(\omega) + I_3(\omega))  \, d \omega \\
& = 6 \times  2^{\frac{3(d-2s)}{2} - d} \int_{\Sph^d} \omega_1^2 \omega_2^2 \omega_3^2 \, d\omega > 0, 
\end{align*}
because all the monomials in $I_2$ and $I_3$ are odd functions of at least one coordinate $\omega_i$ and thus cancel in the integral. This completes the proof of Theorem \ref{theorem strict inequality}.
\end{proof}

\end{document}